\numberwithin{equation}{section}
\begin{document}

\baselineskip=17pt

\newtheorem{thm}{Theorem}[section]
\newtheorem{cor}[thm]{Corollary}
\newtheorem{lem}[thm]{Lemma}
\newtheorem{lemma}[thm]{Lemma}
\newtheorem{prop}[thm]{Proposition}
\newtheorem{conj}[thm]{Conjecture}
\newtheorem{claim}[thm]{Claim}
\newtheorem{question}[thm]{Question}
\newtheorem{defi}[thm]{Definition}

\theoremstyle{definition}
\newtheorem{defn}[thm]{Definition}
\newtheorem{example}[thm]{Example}

\theoremstyle{remark}
\newtheorem{rmk}[thm]{Remark}

\newcommand{\BA}{{\mathbb{A}}}
\newcommand{\BB}{{\mathbb{B}}}
\newcommand{\BC}{{\mathbb{C}}}
\newcommand{\BD}{{\mathbb{D}}}
\newcommand{\BE}{{\mathbb{E}}}
\newcommand{\BF}{{\mathbb{F}}}
\newcommand{\BG}{{\mathbb{G}}}
\newcommand{\BH}{{\mathbb{H}}}
\newcommand{\BI}{{\mathbb{I}}}
\newcommand{\BJ}{{\mathbb{J}}}
\newcommand{\BK}{{\mathbb{K}}}
\newcommand{\BL}{{\mathbb{L}}}
\newcommand{\BM}{{\mathbb{M}}}
\newcommand{\BN}{{\mathbb{N}}}
\newcommand{\BO}{{\mathbb{O}}}
\newcommand{\BP}{{\mathbb{P}}}
\newcommand{\BQ}{{\mathbb{Q}}}
\newcommand{\BR}{{\mathbb{R}}}
\newcommand{\BS}{{\mathbb{S}}}
\newcommand{\BT}{{\mathbb{T}}}
\newcommand{\BU}{{\mathbb{U}}}
\newcommand{\BV}{{\mathbb{V}}}
\newcommand{\BW}{{\mathbb{W}}}
\newcommand{\BX}{{\mathbb{X}}}
\newcommand{\BY}{{\mathbb{Y}}}
\newcommand{\BZ}{{\mathbb{Z}}}

\newcommand{\CA}{{\mathcal A}}
\newcommand{\CB}{{\mathcal B}}
\newcommand{\CC}{{\mathcal C}}
\newcommand{\CD}{{\mathcal D}}
\newcommand{\CE}{{\mathcal E}}
\newcommand{\CF}{{\mathcal F}}
\newcommand{\CG}{{\mathcal G}}
\newcommand{\CH}{{\mathcal H}}
\newcommand{\CI}{{\mathcal I}}
\newcommand{\CJ}{{\mathcal J}}
\newcommand{\CK}{{\mathcal K}}
\newcommand{\CL}{{\mathcal L}}
\newcommand{\CM}{{\mathcal M}}
\newcommand{\CN}{{\mathcal N}}
\newcommand{\CO}{{\mathcal O}}
\newcommand{\CP}{{\mathcal P}}
\newcommand{\CQ}{{\mathcal Q}}
\newcommand{\CR}{{\mathcal R}}
\newcommand{\CS}{{\mathcal S}}
\newcommand{\CT}{{\mathcal T}}
\newcommand{\CU}{{\mathcal U}}
\newcommand{\CV}{{\mathcal V}}
\newcommand{\CW}{{\mathcal W}}
\newcommand{\CX}{{\mathcal X}}
\newcommand{\CY}{{\mathcal Y}}
\newcommand{\CZ}{{\mathcal Z}}

\newcommand{\ra}{{\ \longrightarrow\ }}
\newcommand{\la}{{\ \longleftarrow\ }}
\newcommand\Hom{\operatorname{Hom}}
\newcommand\Id{\operatorname{Id}}
\newcommand\id{\operatorname{id}}
\newcommand\Spec{\operatorname{Spec}}
\newcommand\End{\operatorname{End}}
\newcommand\Aut{\operatorname{Aut}}

\newcommand{\an}{{\mathrm{an}}}
\newcommand{\val}{{\mathrm{val}}}
\newcommand{\Pic}{\mathop{\rm Pic}\nolimits}


\title{Equidistribution of torsion points in abelian varieties}

\author{Jiyao Tang\\Peking University\\ 
Beijing 100871, China\\
E-mail: JiyaoTang@pku.edu.cn}

\date{}

\maketitle


\renewcommand{\thefootnote}{}



\renewcommand{\thefootnote}{\arabic{footnote}}
\setcounter{footnote}{0}


\begin{abstract}
Let $A$ be an abelian variety over a field $K$ with a complete non-Archimedean norm. We prove that the torsion points of $A$ are equidistributed over $A^\an$ with respect to the canonical measure.
\end{abstract}

\section{Introduction}
\subsection{Motivation}
When $A$ is an abelian varieties over the field of complex numbers $\BC$, it's isomorphic to $\BC^g/\Gamma$ for a positive integer $g$ and a complete lattice $\Gamma\subset \BC^g$.  

Let $f$ be a real-valued continuous funtion on $A\cong \BC^g/\Gamma$, here $f$ is continuous in the complex topology. Then by the definition of Riemann integral, we know
$$
\lim_{m\rightarrow \infty}\frac{1}{m^{2g}}{\Sigma_{x\in A[m]}f(x)}=\int_{A}f\mathrm{d}\mu
$$
where $A[m]$ is the set of the $m$-torsion points of $A$ and $\mathrm{d}\mu$ is the canonical Haar measure.

When the base field changes to number field, Szpiro-Ullmo-Zhang \cite{SUZ} proved the equidistribution problem under the setting of Arakelov geometry. The equidistribution theorem of \cite{SUZ} is crucial in the proof of the Bogomolov conjecture in number fields by Ullmo \cite{Ul} and Zhang \cite{Zh}.

When the base field is a non-archimedean field, Chambert-Loir \cite{Ch} defined the canonical measure over Berkovich spaces and proved an equidistribution theorem \cite[Theorem 3.1]{Ch}. It is a non-archimedean analogue of Szpiro-Ullomo-Zhang's theory. 

And in \cite{Yuan}, Yuan proved a generalised version of the equidistribution theorem in \cite{SUZ} by a bigness theorem in the setting of Arakelov theory as an arithmetic analogue of a classical theorem of Siu.

Gubler \cite{Gu1} and \cite{Gu2} also gives an explicit description of the canonical measure in terms of tropical geometry. And it is a key step in the proof of Bogomolov conjecture for totally degenerate abelian varieties over function fields \cite{Gu3}.

GIGNAC \cite{Will} proved the case of good reduction.

My work implies the result for abelian varieties over local fields and can be descended to number fields.

\subsection{Terminology and statement of the problem}
Let $K$ be a non-arichimedean field, $R$ its valuation ring and $k$ the residue field.

Let $A$ be an abelian variety and $\CA$ the N\'eron model of $A$ over $R$. Let $\CA_k$ be the corresponding special fibre.

For any scheme $X$ over $K$, later $X^\an$ will be the corresponding Berkovich space. 

By \cite{SW}, we have the Raynaud extension
\begin{equation}\label{Raynaud1} 
0\ra T \ra E \ra B \ra 0   \tag{$\ast$}
\end{equation}
where $T$ is an algebraic torus over $K$, $B$ is an abelian variety of good reduction over $K$ and $E/M=A^\an$ for a lattice $M$ of rank $r$=rank $T$. $\pi:E\ra A^\an$ will refer to the projection map (a morphism between Berkovich spaces).

Also by \cite{SW},  we know the Raynaud extension (\ref{Raynaud1}) locally on $B^\an$ trivial, i.e, there is an affinoid open cover $\{U_\alpha\}$ of $B^\an$ such that $E|_{U_\alpha}=T^\an \times U_\alpha$ thus we can define the valuation map, $\val:E|_{U_\alpha}\cong T^\an \times U_\alpha\ra\BR^r$, where $r$ is the rank of $T$, also the embedding morphism of the skeleton $\tau:\BR^r \ra E$ and we know $\val(M)$ is  a lattice in $\BR^r$ of rank $r$.

By $g$, $r$, $s$ we mean the dimention of $X$, $T$, $B$ respectively (thus $g=r+s$). 
We want to prove the following theorem:
\begin{thm}\label{main}
	Let $f$ be a continuous funtion on $A^\an$, we have 
	$$
	\lim_{m\rightarrow \infty}\frac{1}{m^{2g}}\Sigma_{x\in A[m]}\deg (x)f(x)=\int_{A^\an}f\mathrm{d}\mu
	$$
	where $m$ is invertible in $K$ and $\mathrm{d}\mu$ is the canonical measure in $A^\an$.
\end{thm}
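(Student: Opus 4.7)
The plan is to use the Raynaud uniformization~(\ref{Raynaud1}) to split the problem into a tropical (torus) direction and a good-reduction (abelian) direction, each of which is already known to equidistribute. Since $m$ is invertible in $K$, multiplication by $m$ is surjective on $E$, and the snake lemma applied to $0 \to M \to E \to A^{\an} \to 0$ yields the short exact sequence
\[
0 \to E[m] \to A[m] \to M/mM \to 0,
\]
using that the lattice $M$ is torsion-free. A geometric $m$-torsion point $x$ of $A$ lifts to some $\tilde x \in E$ with $m\tilde x \in M$, and hence $\val(\tilde x) \in \tfrac{1}{m}\val(M) \subset \BR^r$ on each $U_\alpha$. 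Passing to the quotient by $\val(M)$ gives a well-defined retraction $\rho : A^{\an} \to \BR^r/\val(M)$ sending $A[m]$ into the finite subgroup $\tfrac{1}{m}\val(M)/\val(M)$ of order $m^r$.

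I would then decompose the target sum according to $\rho$:
\[
\frac{1}{m^{2g}}\sum_{x \in A[m]}\deg(x) f(x) \;=\; \frac{1}{m^r}\sum_{v \in \tfrac{1}{m}\val(M)/\val(M)}\frac{1}{m^{r+2s}}\sum_{\substack{x \in A[m] \\ \rho(x)=v}}\deg(x) f(x).
\]
For each $v$, the inner index set is an $E[m]$-torsor sitting inside the fiber $\rho^{-1}(v)$; the projection $E \to B$ maps this torsor onto $B[m]$ with fibers $T[m]$ of size $m^r$. By GIGNAC's theorem~\cite{Will}, the inner sum converges to $f$ evaluated at the unique skeleton point of $A^{\an}$ above $v$, which via the Raynaud description corresponds to the Shilov point of $B^{\an}$ in the fiber over $v$. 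Meanwhile, the outer sum is a classical Weyl-type Riemann sum of the finite subgroup $\tfrac{1}{m}\val(M)/\val(M)$ inside the compact real torus $\BR^r/\val(M)$, which converges to integration against normalized Haar measure. By Gubler's description~\cite{Gu1,Gu2} of the canonical measure $\mu$ as the pushforward of Haar on $\BR^r/\val(M)$ along the skeleton embedding $\tau$, the combined limit is exactly $\int_{A^{\an}} f\, d\mu$.

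The main obstacle is that the Raynaud extension is only locally trivialized over $B^{\an}$, so the inner-sum argument cannot proceed by a global product decomposition. A partition of unity on $B^{\an}$ subordinated to the affinoid cover $\{U_\alpha\}$ is needed to piece together the local decompositions, and one must verify that the convergence in GIGNAC's theorem is uniform in the base point $v$ so that the double limit can be interchanged; a standard way is to approximate $f$ uniformly by tensor products of a function on the skeleton with a function factoring through the projection to $B^{\an}$, then apply the two separate equidistributions. A secondary bookkeeping point is to ensure compatibility of the Galois action with $\rho$, so that the weighted sum $\sum_{x \in A[m]}\deg(x) f(x)$ over closed points agrees with the uniform sum over the $m^{2g}$ geometric $m$-torsion points used in the asymptotic analysis.
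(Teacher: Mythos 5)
Your high-level decomposition --- skeleton direction handled as a Riemann sum on $\BR^r/\val(M)$, fiber direction handled by concentration at Shilov points --- is the same skeleton as the paper's argument, and the exact sequence $0\to E[m]\to A[m]\to M/mM\to 0$ is correct. But the proposal leaves the actual content unproved, and the repair you sketch for the obstacle you correctly identify does not work. First, the inner sum over $\{x\in A[m]:\rho(x)=v\}$ cannot be handled by citing Gignac: the fiber $\rho^{-1}(v)$ is not an abelian variety of good reduction, and the points in that fiber project under $E\to B$ not to $B[m]$ but to the $m$-division points of the image in $B$ of a lattice element (a coset of $B[m]$ that varies with both $v$ and $m$). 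So even the $B$-direction of your inner sum requires an equidistribution statement for preimages $[m]^{-1}(b_{v,m})$ of moving points, uniformly in $v$ --- precisely the uniformity you flag but do not supply. Second, your proposed fix via ``tensor products of a function on the skeleton with a function factoring through the projection to $B^\an$'' is not available: there is no global morphism $A^\an\to B^\an$, because $M$ has nontrivial image in $B$ and the Raynaud extension is only locally trivial over $B^\an$; likewise a partition of unity pulled back from $B^\an$ to $E$ is not $M$-invariant and does not descend to $A^\an$. Any approximation argument must therefore be carried out locally over a compact cover of $B^\an$, and that localization is exactly the step your proposal omits.

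For comparison, the paper avoids conditioning on $v$ altogether. It covers $B^\an$ by compacts $V_j$ inside affine opens over which the extension is trivial, applies Stone--Weierstrass to the algebra generated by algebraic functions on $T_0\times V_j$, and proves a single global estimate: for each such algebraic function $f$, the set of pre-$m$-torsion points with $f(x)\ne f(\tau(\val(x)))$ has size $o(m^{2g})$ (combining a torsion-counting lemma on proper closed subsets of $B$ with a finite-hyperplane argument in the tropical direction). This turns the fiber-direction concentration into a counting statement that needs no interchange of limits and no uniformity in $v$; only the tropical Riemann-sum step survives as a genuine limit. To complete your route you would need to either prove the fiberwise equidistribution with explicit uniformity in $v$ and $m$, or replace the fiberwise analysis by a global $o(m^{2g})$ bound of the paper's type.
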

\begin{rmk}
	The condition $m$ is invertible $K$ is necessary, for a counterexample we can consider a supersingular elliptic curve $E$ with good reduction, in this case, the canonical measure is dirac measure of the divisorial point $x_G$, but $0\in E$ is the only torsion point and $x\ne x_G$.
\end{rmk}
For an $m$-torsion point $x$ of $A$, it admits $\deg(x)$ preimages in $A_{\widehat{\bar{K}}}$, thus the proposition for $A_{\widehat{\bar{K}}}$ implies that for $A$, so without of generality, we may assume the field $K$ is alegbraically closed, then all algebraic torus will be splitting torus. And latter we will always have the assumption that $m$ is invertible in $K$.

\subsection{Valuation map and skeleton}
For a splitting torus $T$, we have the valuation map:
$$
\val:T^\an\ra \Hom(\hat{T},\BR), x\mapsto (\chi\mapsto -\log|\chi|_x)
$$
Where $\hat{T}$ is the character group.
Now fix a basis ${T_i}_{i=1}^r$ of $\hat{T}$, we have an isomorphism $\Hom(\hat{T},\BR)\cong \BR^n$. Then $T=\Spec[T_1,\cdots,T_r,T_1^{-1},\cdots,T_r^{-1}]$ and the valuation map can be expressed as:
$$
\val:T^\an\ra \BR^r, x\mapsto (-\log|T_1|_x,\cdots,-\log|T_r|_x)
$$
For an abelian variety $A$, we have the Raynaud extension,
\begin{equation} 
0\ra T \ra E \ra B \ra 0   
\end{equation}
And by \cite{SW} (the following will also be proved in section \ref{GeneralCase}), there is an affnoid covering $\{U\}$ of $B$, such that $E|_U^\an=T^\an \times U^\an$, then we can define the valuation map on $E|_U^\an$ by composition of:
$$
E|_U^\an\ra  T^\an \stackrel{\val}{\ra} \BR^r
$$ 

Because the transition map $E|_{U\cap V}\cong T\times (U\cap V) \ra T\times (U\cap V)\cong E|_{U\cap V}$ does not change the value (see \cite{SW}) of $|T_i|$ so we have a well-defined map $\val:E^\an \ra \BR^r$, and pass to a map $\val:A^\an\ra \BR^r/\Gamma$, where $\Gamma$ is a complete lattice in $\BR^r$. By \cite{Berk}, this map is a deformation retract, and we denote the embedding by $\sigma: \BR^r\ra E^\an$.

Then the Haar measure in $\BR^r/\Gamma$ pushforward to a measure in $E^\an/M\cong A^\an$, this is the canonical measure in $A^\an$     

\section{Good reduction case}\label{GoodReductionCase}
In this section, we suppose the abelian variety $A$ has good redution. Let $x_G$ be the Gauss point.

At first, we note that $A$ can be covered by finite affine opens $U_i$, and we can cover each $U_i^\an$ with some $V_{i_j}^\circ$, where $V_{i_j}$ are affinoid domains. By the compactness of $A^\an$, we can find a finite compact cover ${V_j}$ ($j=1,\cdots,n$) of $A^\an$, and each $V_j$ is contained in an affine open subset $U_j$ of $A^\an$. Substistude $V_j$ by $V_j\cup \{x_G\}$ we may assume $x_G\in V_j$ (we will only use the fact that $V_j$ is compact and contained in an affine open).

Then we show funtions in $V_j$ can be approximated by algebraic functions:
\begin{defi}
	For $V_j\subset U_j^\an$ where $U_j$ is an affine scheme $\Spec B$, for $f\in B$, we call $V_j\ra \BR: x\mapsto |f|_x$ the corresponding algebraic function.
\end{defi}  
We need the following Stone-Weierstrass theorem \cite[$P121-P122$]{WR}:
\begin{thm}[Stone-Weierstrass]\label{Stone-Weierstrass}
	If $X$ is a compact and Hausdorff space, denote by $C(X)$ the Banach algebra of all continuous $\BR$-functions on $X$. Let $A$ be a subalgebra such that:\\
	(a) $A$ separates points on $X$, i.e, for different points $x,y\in X$, there is an $f\in A$ such that $f(x)\ne f(y)$.\\
	(b) At every $p\in X$, $f(p) \ne 0$ for some $f\in A$.\\
	Then $A$ is dense in $C(X)$.
\end{thm}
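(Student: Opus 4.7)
The plan is to follow the classical lattice-theoretic proof. I would first pass to the uniform closure $\bar A \subset C(X)$, which is still a subalgebra satisfying (a) and (b), and reduce to showing $\bar A = C(X)$.

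The crux is showing that $\bar A$ is closed under pointwise absolute value: if $f \in \bar A$, then $|f| \in \bar A$. To establish this, I would normalize so that $\|f\|_\infty \le 1$ and approximate $t \mapsto |t|$ uniformly on $[-1,1]$ by polynomials $p_n$ with no constant term, for instance via the Newton-type recursion $q_0 = 0$, $q_{n+1}(s) = q_n(s) + \tfrac12(s - q_n(s)^2)$, which converges uniformly to $\sqrt{s}$ on $[0,1]$; substituting $s = f^2$ gives a polynomial in $f$ lying in $\bar A$ whose limit is $|f|$. One then obtains the lattice identities $\max(f,g) = \tfrac12(f+g+|f-g|)$ and $\min(f,g) = \tfrac12(f+g-|f-g|)$, exhibiting $\bar A$ as a sublattice of $C(X)$.

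The second ingredient is a two-point interpolation statement: for any $x \ne y$ in $X$ and any $a,b \in \BR$ there exists $h \in \bar A$ with $h(x) = a$ and $h(y) = b$. This is a linear-algebra consequence of (a) and (b): the evaluation map $\bar A \to \BR^2$ at $\{x,y\}$ has image a subalgebra of $\BR^2$ separating the two coordinates and vanishing at neither, hence is all of $\BR^2$. Given $f \in C(X)$ and $\varepsilon > 0$, for each $x$ I would fix $h_{x,y}\in\bar A$ matching $f$ at $x$ and $y$; for fixed $x$ the sets $\{z : h_{x,y}(z) < f(z)+\varepsilon\}$ are open and cover $X$, so by compactness finitely many $y_1,\dots,y_k$ suffice, and $g_x := \min_j h_{x,y_j} \in \bar A$ satisfies $g_x < f+\varepsilon$ everywhere while still agreeing with $f$ at $x$. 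A symmetric compactness argument taking a finite pointwise maximum of such $g_x$'s produces the required $\varepsilon$-approximation.

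The main obstacle is the polynomial approximation of $|t|$ by polynomials without constant term; this is the only genuinely analytic step, everything else is algebraic manipulation and compactness. A subtle bookkeeping point is that when $1 \notin A$ the approximation of constants is not automatic, and condition (b) is precisely what one needs: it lets one locally produce functions bounded away from zero, which combined with the lattice structure allows constants to be recovered inside $\bar A$.
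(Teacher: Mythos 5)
Your proposal is correct. Note, however, that the paper offers no proof of this statement at all: it is quoted verbatim from Rudin's \emph{Functional Analysis} (the cited pages contain Theorem 5.7 there), so there is no in-paper argument to compare against. It is worth pointing out that your route differs from the one in the cited source: Rudin proves Stone--Weierstrass by de Branges' functional-analytic argument (if $\bar A \ne C(X)$, take an extreme point $\mu$ of the weak*-compact convex set of annihilating regular Borel measures of norm at most $1$, show via the separation hypothesis that $\mathrm{supp}\,\mu$ is a single point, and then use (b) to force $\mu = 0$), whereas you give the classical lattice-theoretic proof. Your version is more elementary --- it needs only the monotone polynomial approximation of $\sqrt{s}$ and two compactness covers, with no Hahn--Banach or Krein--Milman --- and you correctly handle the non-unital situation, which is the version actually stated here: the recursion $q_{n+1}=q_n+\tfrac12(s-q_n^2)$ with $q_0=0$ produces polynomials with zero constant term, so $q_n(f^2)\in\bar A$ without assuming $1\in A$, and the classification of subalgebras of $\BR^2$ (zero, the two axes, the diagonal, and $\BR^2$) shows that (a) and (b) together force the two-point evaluation map to be surjective. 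The only detail left implicit is the degenerate case $y=x$ in the covering argument, where one needs some $h\in\bar A$ with $h(x)=f(x)$; this follows from (b) by scaling $f_0^2$ for any $f_0\in A$ with $f_0(x)\ne 0$.
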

Now denote $L_j$ to be the $\BR$-algebra generated by algebraic funcions over $V_j$. By theorem \ref{Stone-Weierstrass}, we know the $L_j$ is dense in $C(V_j)$. 

\begin{claim}\label{GoodReductionClaim}
	Let $U$ be an affine open of $A$, and $f$ be an algebraic function in $U^\an\subset A^\an$, then $f(x_G)\ne f(x)$ at only $O(m^{2g-2})$ points, where $x_G$ is the Gauss point of $X^\an$ and $g$ is the dimension of $A$.
\end{claim}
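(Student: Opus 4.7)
My plan is to translate the condition $|f|_x \ne |f|_{x_G}$ into a condition on the specialization of $x$ in the special fibre of the N\'eron model, and then to bound the resulting exceptional set by a Manin--Mumford-type count of torsion points on a proper subvariety of an abelian variety.

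I first spread $f$ out to an integral model. Since $A$ has good reduction, the N\'eron model $\CA$ is an abelian $R$-scheme with special fibre $\CA_k$, and the affine open $U \subset A$ extends to an affine open $\CU \subset \CA$. Scaling $f$ by a constant of $K^\times$ multiplies $|f|_{x_G}$ and every $|f|_x$ by the same factor, so I may assume $|f|_{x_G} = 1$; equivalently, $f$ is regular on $\CU$ and its reduction $\bar f \in \CO(\CU_k)$ is not identically zero. For a rigid point $x \in U^{\an}$ with specialization $\bar x \in \CU_k$, one then has $|f|_x \le 1$, with equality if and only if $\bar f(\bar x) \ne 0$. Consequently,
\[
\{\, x \in A[m] \cap U : |f|_x \ne |f|_{x_G} \,\}
\;=\;
\{\, x \in A[m] \cap U : \bar x \in Z(\bar f) \,\},
\]
where $Z(\bar f) \subset \CU_k$ is a proper closed subscheme of dimension at most $g - 1$.

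Next I use the hypothesis that $m$ is invertible in $K$ to control the reduction on $m$-torsion. When $m$ is moreover coprime to the residue characteristic, $\CA[m]$ is finite \'etale over $R$ of rank $m^{2g}$, so the reduction map $A[m] \to \CA_k[m]$ is a bijection onto a subgroup of order $m^{2g}$. In mixed characteristic, if the residue characteristic $p$ divides $m$, I would split $m = p^a m'$ and treat the prime-to-$p$ part as above; the $p$-part contributes only a bounded factor coming from the formal group. The exceptional set is therefore in bijection, up to a multiplicative $O(1)$, with a subset of $\CA_k[m] \cap Z(\bar f)$.

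The remaining step, and the principal obstacle, is the estimate
\[
|\CA_k[m] \cap W| \;=\; O\!\left(m^{2 \dim W}\right)
\]
for a proper closed subvariety $W \subsetneq \CA_k$ of the abelian variety $\CA_k$; applied here with $\dim W \le g - 1$ it yields the desired $O(m^{2g-2})$ bound. I would establish it by intersection theory on $\CA_k$: fix a symmetric ample line bundle $L$, so that $[m]^{*} L \cong L^{\otimes m^2}$, and, for a divisor $W$, pick $g-1$ translates $W_1, \dots, W_{g-1}$ of $W$ whose intersection is a curve of $L$-degree bounded independently of $m$, with transversality ensured by Kleiman's theorem on the abelian variety $\CA_k$. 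Counting torsion on this curve by pulling back a base point under $[m]$ and using that $[m]$ multiplies $L$-degrees by $m^{2}$ then yields the $O(m^{2(g-1)})$ estimate. Combined with the identification in the previous paragraph, this gives the claimed bound on the number of exceptional torsion points.
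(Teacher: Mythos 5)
Your proposal follows essentially the same route as the paper: normalize $f$ so that $|f|_{x_G}=1$, observe that $|f|_x\ne|f|_{x_G}$ exactly when the specialization of $x$ lands in the vanishing locus of the reduction $\bar f$ in $\CA_k$, use invertibility of $m$ to make reduction injective on $m$-torsion, and then bound $m$-torsion on a proper closed subvariety of the abelian variety $\CA_k$ by intersection theory with a symmetric ample $L$ satisfying $[m]^*L\cong L^{\otimes m^2}$ (this last step is the paper's Lemma~\ref{GoodRedutionCaseLem}). The one place your execution drifts is the final count: intersecting $g-1$ translates of $W$ to produce a curve and counting torsion on that curve only yields $O(m^2)$ points \emph{on the curve}, not a bound for torsion on all of $W$; the paper instead bounds $\#(W\cap\CA_k[m])$ by $([m]_*W\cdot H)$ for $H$ a general complete intersection of $\dim W$ members of $|L|$ through the origin, and then applies the projection formula to get $m^{2\dim W}(W\cdot L^{\dim W})$. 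Replacing your curve construction with that projection-formula argument closes the gap; everything else matches.
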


We will need the following lemma:
\begin{lemma}\label{GoodRedutionCaseLem}
	Let $A$ be an abelian variety of dimension $g$ over a field $k$, $Z$ is a non-trivial closed subset of $A$, then there are only $O(m^{2g-2})$ (related to $Z$) $m$-torsion points in $Z$ .
\end{lemma}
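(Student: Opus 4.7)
My plan is to prove this by induction on $g = \dim A$, using the connected component of the stabilizer of $Z$ in $A$ to reduce the dimension. Without loss of generality, I pass to the algebraic closure of $k$ and assume $Z$ is irreducible, since a finite union of components contributes only a constant multiplicative factor. Let $H := \mathrm{Stab}(Z)^0$, the identity component of $\{a \in A : a + Z = Z\}$; this is an abelian subvariety of $A$. Suppose first that $\dim H \geq 1$. Setting $\pi : A \to \bar A := A/H$, the $H$-invariance of $Z$ gives $Z = \pi^{-1}(\bar Z)$ for $\bar Z := \pi(Z)$, and $\dim \bar Z = \dim Z - \dim H < g - \dim H = \dim \bar A$, so $\bar Z$ is a proper closed subset of $\bar A$. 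Applying the inductive hypothesis to $\bar A$ yields $|\bar Z \cap \bar A[m]| = O(m^{2(g - \dim H) - 2})$. Since $m$ is invertible in $k$, the map $[m] : H \to H$ is surjective on geometric points, so for each $\bar P \in \bar Z \cap \bar A[m]$ the fiber $\pi^{-1}(\bar P) \cap A[m]$ has exactly $|H[m]| = m^{2 \dim H}$ elements. Multiplying gives $|Z \cap A[m]| = O(m^{2g - 2})$.

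The remaining case is $\dim H = 0$. Here I would invoke Raynaud's Manin--Mumford theorem (together with the prime-to-$p$ analog in positive characteristic, due to Hrushovski and Pink--R\"ossler, which applies since $m$ is invertible in $k$): the Zariski closure of $Z \cap A_{\mathrm{tors}}$ is a finite union $\bigcup_i (x_i + B_i) \subseteq Z$ of torsion-translates of abelian subvarieties $B_i$. Since $Z \subsetneq A$, each $\dim B_i \leq g - 1$, so $|(x_i + B_i) \cap A[m]| \leq |B_i[m]| = m^{2 \dim B_i} \leq m^{2g - 2}$, and summing over the finitely many components yields the bound. The base case $g = 1$ requires no appeal to Manin--Mumford, since a proper closed subset of an elliptic curve is finite.

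The main obstacle is this trivial-stabilizer case, which genuinely requires Manin--Mumford or an equivalent input; the elementary stabilizer-quotient reduction cannot dispose of it on its own. An attempt to handle this case by elementary means---say by intersecting $Z$ with translates $a + Z$ for cleverly chosen torsion elements $a$ and iterating until the dimension drops to zero---runs into the difficulty that the intersections need not have well-behaved stabilizers, so controlling them appears to require reproving key parts of the Manin--Mumford structure. For the purposes of this lemma, invoking the theorem directly is by far the cleanest route.
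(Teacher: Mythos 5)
Your reduction via the connected stabilizer $H=\mathrm{Stab}(Z)^0$ is fine, and in characteristic $0$ the whole argument goes through (Raynaud's theorem handles the trivial-stabilizer case, and your count of $m$-torsion in a torsion translate $x_i+B_i$ is correct). But there is a genuine gap in positive characteristic, and that is precisely the case the paper needs: the lemma is applied to the special fibre $\CA_k$ over the residue field $k$, which in the basic example $K=\BC_p$ is $\bar{\BF}_p$. Over $\bar{\BF}_p$ \emph{every} point of $A(\bar{\BF}_p)$ is torsion (each $A(\BF_{p^n})$ is a finite group), so for any irreducible $Z$ the Zariski closure of $Z\cap A_{\mathrm{tors}}$ is $Z$ itself. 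The structure statement you attribute to Hrushovski and Pink--R\"ossler --- that this closure is a finite union of torsion translates of abelian subvarieties --- is therefore false as stated; the correct theorem allows additional components of the form $a+B+h(Y_{k})$ with $Y$ a subvariety of an abelian variety defined over a finite field and $h$ an isogeny-type homomorphism. For such components (e.g.\ a genus-$2$ curve with trivial stabilizer inside a simple abelian surface over $\bar{\BF}_p$) your counting step, which relies on the components being cosets of abelian subvarieties, says nothing, and bounding the $m$-torsion on $h(Y_k)$ is essentially the original problem again. So the $\dim H=0$ case is not closed in the setting where the lemma is actually used.

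The paper's own proof is much more elementary and characteristic-free: choose a symmetric very ample $\CL$ with $[m]^*\CL\cong\CL^{\otimes m^2}$, take $H$ to be the intersection of $k=\dim Z$ general divisors in $|\CL|$ through the origin, observe $A[m]=[m]^{-1}(O)\subseteq [m]^{-1}(H)$, and bound
$\#(Z\cap A[m])\le (Z\cdot([m]^*\CL)^{k})=m^{2k}(Z\cdot\CL^{k})=O(m^{2g-2})$.
This intersection-theoretic count needs no input from Manin--Mumford and works over any field and for any $m$. If you want to salvage your route, you would either have to restrict to characteristic $0$ (insufficient for the application) or replace the Manin--Mumford step in the trivial-stabilizer case by a direct bound such as the one above --- at which point the stabilizer induction becomes unnecessary.
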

\begin{proof}
	Without loss of generality, we may assume $Z$ is irreducible and of degree $k<g$, let $[m]$ be the mutiplication by $m$ in $A$. Then we choose a very ample and symmetric bundle $\CL$ on $A$. By \cite[Section 6]{AV}, we have $m^*\CL=\CL^{\otimes m^2}$. Choose $H$ to be the interstion of $k$ general divisors containing the indentity $O\in X$ corresponding to $\CL$, then by projection formula
	\begin{equation}
	\begin{split}
	\#(Z\cap A[m])
	& \le([m]_*Z \cdot H)\\ 
	& =(Z\cdot [m]^*H) \\
	& =(Z\cdot [m]^*\CL \cdots [m]^*\CL)\\
	& =m^{2k}(Z\cdot \CL\cdots\CL)
	\end{split}
	\end{equation}
	Then number of the $m$-torsion points in $Z$ is less than $m^{2k}(Z\cdot \CL\cdots\CL)$, which is $O(m^{2g-2})$.
\end{proof}

Now let $f$ be an algebraic function on $U^\an=(\Spec A)^\an$ corresponding to $h\in A$. We may assume $\mathrm{ord}_{\CA_k}h=0$. Then $f(x_G)=1$ and for any point $x\in A^\an$, $f(x)\ne 1$ if and only if $\sigma(x)\in V(\tilde{h})$ where $\sigma: A^\an \ra \CA_k^\an$ is the redution map and $\tilde{h}$ is the redution of $h$.

Because $m$ is not divisible by char $K$, $m$ is thus invertible in the formal group so the redution map induces injection of the $m$-torsions of $A$ into the $m$-torsion of $\CA_k$, and by lemma \ref{GoodRedutionCaseLem} we know there are only $o(m^{2g})$ $m$-torsion points on $V(\tilde{h})$, thus $o(m^{2g})$ $m$-torsion points $x$ satisfies $f(x)\ne f(x_G)$. 

By previous discuss, we know $L_j$ is dense in $C(V_j)$. For any function $f\in C(A^\an)$, suppose $\|f-f_{j}\|<\epsilon$ where $f_j\in L_j$ are linear combination of algebraic functions on $V_j$. Let $N=\max_j(\|f\|,\|f_{j}\|)$, and for only $o(m^{2g})$ points, we have $f_{j_i}(x)\ne f_{j_i}(x_G)$. So
\begin{equation}
\begin{split}
&\left|\frac{1}{m^{2g}}\Sigma_{x\in A[m]}f(x)-f(x_G)\right|\\
=&\left|\frac{1}{m^{2g}}\Sigma_{x\in A[m]}(f(x)-f(x_G))\right|\\
\le & \Sigma_{j=1}^{n}\frac{1}{m^{2g}}\Sigma_{x\in A[m]\cap V_j}(|f(x)-f_{j}(x)|+|f_{j}(x)-f_{j}(x_G)|+|f_{j}(x_G)-f(x_G)|)\\
\le& 2n\epsilon+\Sigma_{j=1}^{n}\frac{1}{m^{2g}}o(m^{2g})N\\
=& 2n\epsilon+o(1)
\end{split}   	
\end{equation}   
Because $\epsilon$ is arbitrary, this term converges to zero when $m\rightarrow \infty$, which is we want to show.

\section{Totally degenerate case}\label{TotallyDegenerateCase}
In this section, we suppose the abelian vareity $A$ is totally degenerate, i.e, $A^\an$ can be expressed as a quotient $T^\an/M$ where $T$ is a algebraic torus of rank $r$, $M$ is a lattice of rank $n$. By the assumption that $K$ is algebraically closed, $T$ is a spliting torus. And we denote by $\pi: T^\an \ra A^\an$ the projection map.

Then $T^\an$ can be expressed as the set of semi-norms on $K[t_1,\cdots,t_n,t_1^{-1},\cdots,t_n^{-1}]$, we can choose a rational domain $T_0=\{x\in T^\an \big| q^{-1}\le|t_i|_x\le q\}$($q>1$), such that $\pi(T^\an)=A^\an$. Because $T_0$ is an affinoid domain, it is compact. 

We then prove the following lemma:
\begin{lem}\label{lem:tiny}
	For $f=\Sigma_{v\in \BZ^r}a_v t^v \in K[t_1,\cdots,t_r,t_1^{-1},\cdots,t_r^{-1}]$ where $a_v=0$ for all but finitely many $v\in \BZ^r$. We have $|f|_x\ne |f|_{\tau(val(x))}$  only for $o(m^{2r})$ pre-$m$-torsion points $x\in T(K)$, where by pre-$m$-torsion points we mean those being sent to $m$-torsion points by $\pi$.
\end{lem}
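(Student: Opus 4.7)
The plan is to identify the ``bad locus'' where $|f|_x \ne |f|_{\tau(\val(x))}$ as living above a tropical hyperplane arrangement in $\BR^r$, and then count pre-$m$-torsion points whose valuation lies there. Setting $u = \val(x)$, the skeleton point gives the Gauss norm $|f|_{\tau(u)} = \max_v |a_v| e^{-\langle v, u\rangle}$, while for any classical point $x \in T(K)$ above $u$ the non-archimedean triangle inequality gives $|f(x)| \le |f|_{\tau(u)}$, with equality whenever the tropical maximum is uniquely attained. Consequently the bad locus is contained in $\val^{-1}(H)$, where
$$H = \bigcup_{\substack{v \ne w \\ a_v, a_w \ne 0}} \bigl\{ u \in \BR^r : |a_v| e^{-\langle v, u\rangle} = |a_w| e^{-\langle w, u\rangle} = \max_z |a_z| e^{-\langle z, u\rangle}\bigr\}$$
is a finite union of pieces of affine hyperplanes in $\BR^r$.

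I would then exploit the arithmetic of pre-$m$-torsion. The condition $mx \in M$ forces $m\val(x) \in \val(M) =: \Lambda$, a rank-$r$ lattice in $\BR^r$, hence $\val(x) \in \tfrac{1}{m}\Lambda$. Fix a fundamental domain $D$ of $\Lambda$ containing $\val(T_0)$. The $m^{2r} = |A[m]|$ relevant pre-$m$-torsion points then decompose as $m^r$ possible valuations $u \in \tfrac{1}{m}\Lambda \cap D$, each with $m^r$ classical lifts in the fiber $\val^{-1}(u) \cap T(K)$ — this fiber is a coset of $T[m]$, and $|T[m]| = m^r$ since $m$ is invertible in $K$ and $T$ is split. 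A standard lattice-point bound applied to the finitely many affine hyperplanes making up $H$ gives
$$\#\bigl(\tfrac{1}{m}\Lambda \cap H \cap D\bigr) = O(m^{r-1}),$$
so the total number of bad pre-$m$-torsion points is at most $m^r \cdot O(m^{r-1}) = O(m^{2r-1}) = o(m^{2r})$.

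The crux is the first reduction: one must verify that whenever the tropical maximum defining $|f|_{\tau(u)}$ is uniquely attained at some $v^*$, no classical lift above $u$ can fail the equality. This follows from the sharp form of the ultrametric inequality — the strictly dominant term $a_{v^*} t^{v^*}(x)$ survives in $\sum_v a_v t^v(x)$ — and is the only place where the non-archimedean nature of $K$ enters essentially. Once the bad set is trapped inside $\val^{-1}(H)$, the rest is elementary counting on hyperplane slices of the lattice $\tfrac{1}{m}\Lambda$, with the hyperplanes determined by the integer exponent differences $v-w$ in the (finite) support of $f$ independently of $m$.
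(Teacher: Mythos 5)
Your proposal is correct and follows essentially the same route as the paper: the bad locus is trapped over the finite union of affine hyperplanes in $\val(T_0)\subset\BR^r$ where the tropical maximum is attained at least twice, and off this locus the ultrametric equality forces $|f|_x=|f|_{\tau(\val(x))}$. You additionally make explicit the lattice-point count (valuations of pre-$m$-torsion points lie in $\tfrac1m\val(M)$, each hyperplane meets it in $O(m^{r-1})$ points of a fundamental domain, each with $m^r$ lifts) that the paper merely asserts, which is a welcome refinement rather than a different argument.
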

\begin{proof}
	If $f=0$ we are done. Now assume $f\ne 0$. 
	
	Because the norm is non-archimedean, we know for $|t_i|_x=l_i $, if $|a_v| l^v\ne |a_{v'}| l^{v'}$ for all $a_v,a_{v^\prime}\ne 0$, we have  $|f|_x=\max\limits_{v\in \BZ^r}{|a_v|l^v}$ which only depends on the value of $l_i=|t_i|_x$. 
	
	Those $(l_i)$'s satisfying $|a_v| l^v= |a_{v'}| l^{v'}$ for some $a_v,a_{v^\prime}\ne 0$ form finitely many hyperplanes in $\val(T_0)\subset \BR^r$, which can only contain the image of $o(m^{2r})$  pre-$m$-torsion points.
	
	Note that $|t_i|_x=|t_i|_{\tau(val(x))}$ (the valuation map is just $x\mapsto (|t_1|_x,\cdots,|t_r|_x)$), thus we are done.   
\end{proof}

Now let $C$ be the algebra generated by the functions mentioned in lemma \ref{lem:tiny}, then $C$ is dense in $C(T_0)$ by theorem \ref{Stone-Weierstrass}. Becasuse functions in $C$ are finite linear combinations of $|f|$ for $f=\Sigma_{v\in \BZ^n}a_v t^v \in K[t_1,\cdots,t_n,t_1^{-1},\cdots,t_n^{-1}]$, we know for any $g\in C$, we have $|g|_x\ne |g|_{\tau(val(x))}$  only for $o(m^{2r})$ pre-$m$-torsion points. By the compactness of $T_0$, $g$ is bounded, we thus have 
\begin{equation}\label{eq:1}
\frac{1}{m^{2r}}\Sigma_{x\in T_0[m]}(g(x)-g(\tau(val(x)))=o(1) 
\end{equation}
here $T_0[m]$ consists of one representative for each $m$-torsion points in $T_0$, because $C$ is dense in $C(T_0)$, the equation holds for all $g\in C(T_0)$.

Now let $h$ be a continuous function on $A^\an$, then $h'=\pi|_{T_0}\circ h:T_0\ra \BR$ is a continuous function, thus
\begin{equation}
\begin{split}
&\left|\frac{1}{m^{2r}}\Sigma_{x\in A[m]}h(x)-\int_{A^\an} h(x)d\mu\right|\\
=&\left|\frac{1}{m^{2r}}\Sigma_{x\in T_0[m]}h'(x)-\int h(x)d\mu\right|\\
\le&  \left|\frac{1}{m^{2r}}\Sigma (h'(x)-h'(\tau(val(x))))\right|\\
+&\left|\frac{1}{m^{2r}}\Sigma h'(\tau(val(x)))-\int h(x)d\mu\right|\\
=&o(1)+ \left|\frac{1}{m^{2r}}\Sigma h'(\tau(val(x)))-\int h(x)d\mu\right|\\
=&o(1)+ \left|\frac{1}{m^{2r}}\Sigma_{x\in A[m]} h(\tau(val(x)))-\int h(x)d\mu\right|\\
=&o(1).
\end{split}
\end{equation}

The last equation is because $\val(T_0[m])$ are equidistributed in the skeleton of $A^\an$.

So we know the $m$-torsion points are equidistributed in $A$.

\section{General case}\label{GeneralCase}
Now by \cite{SW}, we have the Raynaud extension
\begin{equation}
0\ra T^\an \ra E \ra B^\an \ra 0  
\end{equation} 
Let $r,g,s$ be the dimension of $T,E,B$ respectively. Because $K$ is algebraically closed, $T$ is a splitting torus. To go on, we wish $E$ is `almost' the product of $T^\an$ and $B^\an$. The following lemma is needed:
\begin{lem}
	Let $0\ra (\BG_m)^n \ra E \ra A \ra 0$ be an extension of an abelian variety by a split torus, then there is an affine open cover ${U_i=\Spec C_i}$ of $A$ over which $G$ is trivial, i.e, $E|_{U_i}=(G_m)^n\times U_i$.
\end{lem}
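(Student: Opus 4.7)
The plan is to reduce the statement to the tautological fact that line bundles are, by definition, trivial on some Zariski open cover. Since the torus $\BG_m^n$ is split, pushout of the extension $0\to \BG_m^n\to E\to A\to 0$ along each of the $n$ coordinate projections $p_i\colon \BG_m^n\to \BG_m$ yields extensions
\[
0\ra \BG_m\ra E_i\ra A\ra 0,\qquad i=1,\dots,n,
\]
and the original $E$ is recovered from the $E_i$ as the fibred product $E_1\times_A \cdots \times_A E_n$. A simultaneous trivialization of all the $E_i$ over an open $U\subset A$ therefore produces $E|_U\cong \BG_m^n\times U$, so after taking a common refinement of the trivializing covers it suffices to treat the case $n=1$.

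For $n=1$, forgetting the group structure turns $E\to A$ into a $\BG_m$-torsor on $A$. Under the standard equivalence between $\BG_m$-torsors on $A$ and elements of $H^1(A,\CO_A^\times)=\Pic(A)$, this torsor corresponds to a line bundle $L$ on $A$ whose complement of the zero section is precisely the total space $E$; in fact $L\in \Pic^0(A)=\hat A$ via the Weil--Barsotti identification $\mathrm{Ext}^1(A,\BG_m)\cong \hat A$. By the very definition of a line bundle, $L$ is trivial on each open of some Zariski cover of $A$, and on any such trivializing open we recover $E|_U\cong \BG_m\times U$ as schemes over $U$.

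Finally, since $A$ is a quasi-projective variety, any Zariski open cover of $A$ can be refined to one consisting of affine opens $U_i=\Spec C_i$, which yields the form claimed in the lemma. I do not anticipate any serious obstacle: the entire argument is absorbed by the dictionary between extensions of $A$ by $\BG_m$ and line bundles on $A$, together with the definition of Zariski-local triviality of such bundles; the only point requiring care is the additivity of $\mathrm{Ext}^1$ in the torus argument, which follows from the decomposition $\BG_m^n=\BG_m\times\cdots\times \BG_m$ as group schemes and a direct check that the natural map $E\to E_1\times_A\cdots\times_A E_n$ is an isomorphism.
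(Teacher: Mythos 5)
Your proposal is correct and rests on the same key identification as the paper's proof: forgetting the group structure, $E\to A$ is a $\BG_m^n$-torsor, and via $H^1(A_{\mathrm{et}},\BG_m)\cong H^1(A_{\mathrm{Zar}},\CO_A^\times)=\Pic(A)$ the problem reduces to trivializing $n$ line bundles $\CL_1,\dots,\CL_n$ on a common Zariski open cover; your fibred-product decomposition $E\cong E_1\times_A\cdots\times_A E_n$ is just the torsor-level incarnation of the paper's $H^1(\cdot,\BG_m^n)=\Pic(\cdot)^n$. Note that Hilbert~90 is the one nontrivial input hidden in your ``standard equivalence'': a priori the exact sequence only exhibits $E$ as an fppf (or \'etale) torsor, and it is exactly this comparison that makes it Zariski-locally trivial, as the paper states explicitly. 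Where you genuinely diverge is the final step, and your version is the safer one: you simply invoke local triviality of a line bundle and then refine to an affine cover, whereas the paper exhibits the trivializing open explicitly as the complement of an effective divisor \emph{in the linear equivalence class of} $\CL_i$. Since $\CL_i$ lies in $\Pic^0(A)$ (Weil--Barsotti, as you observe), it has no nonzero global sections when nontrivial, so such an effective divisor need not exist; the paper's construction would need repairing, e.g.\ by writing $\CL_i$ as a difference of very ample classes and removing both divisors. Your route avoids this issue entirely. The only point to spell out in a final write-up is that the natural map $E\to E_1\times_A\cdots\times_A E_n$ is an isomorphism, which follows because it is a morphism of $\BG_m^n$-torsors over $A$ (equivalently, of extensions of $A$ by $\BG_m^n$), and any such morphism is automatically an isomorphism.
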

\begin{proof}
	By the exactness, $G$ is a principal homogeneous space (or a torsor) of $\BG_m^n$ over $A$. Because for any scheme $X$, $H^1(X_{et},\BG_m^n)\cong H^1(X_{zar},\CO_X^*)^n= \Pic(X)^n$, so it suffices to find an open cover such that the image of $E$ is trivial in $\Pic(X)^n$, then an affine refinement for it is what we need.
	
	Suppose $E$ is send to $(\CL_1,\cdots,\CL_n)$, then for any point $x$ of $A$, we can find a prime divisor $Z$ of $A$ not passing $x$, lies in the linear equivalent class of $\CL_i$, set $U=A\setminus Z$, then we have the following exact sequence:
	$\BZ\ra \Pic(A) \ra \Pic(U) \ra 1$, where the first map is $1\mapsto Z$, the second is $D\mapsto D\cap U$. Then $\CL_i$ is trivial in $\Pic (U)$. Do this for every $i=1,\cdots,n$, we thus have $E$ is trivial in $\Pic(U)^n$.	  
\end{proof}

By similar discussion as in section \ref{GoodReductionCase} we can choose a finite compact cover $V_j$ ($j=1,\cdots,n$) of $B^\an$, each $V_j$ is a compact subset lies in a (Zariski) affine open $U_j^\an$ of $B^\an$ and the Gauss point $x_G$ of $B^\an$ is in $V_j$, such that the Raynaud extension is trivial over $U_j$. We choose $T_0$ as Laurent domain in $T^\an$ with finite radius such that $\pi(\cup_{j=1}^{n}T_0\times V_j)$ can cover the whole $A^\an$. 

We then construct a family of functions $f$ in $T_0\times V_j$ satisfies $f(x)\ne f(\tau(val(x)))$ only for $o(m^{2g})$ pre-$m$-torsion points in $T_0\times V_j$.

Now let $L_j\subset C(T_0\times V_j)$ be the $\BR$-subalgebra generated by algebraic functions on the analytication of $T\times U_j=\Spec C_j[t_1,\cdots,t_r,t_1^{-1},\cdots , t_r^{-1}]$ (they can restrict to $T_0\times V_j$). 

For an algebraic function $f$ corresponding to $0\ne h=\Sigma_{v\in \BZ^r}a_v t^v\in C_j[t_1,\cdots,t_r,t_1^{-1},\cdots,t_r^{-1}]$, with $a_v\in C_j$, the locus of $U_j$ where all $a_v$ vanish form a closed subscheme of $U_j$, so it can only contain $o(m^{2s})$ $m$-torsion points of $B$ by the discussion of section \ref{GoodReductionCase}. Over the fibers of them, there are only $o(m^{2g})$ pre-$m$-torsion points of $A^\an$ in $T_0\times U_j$. And for an $m$-torsion point of $B$, $x\in U_j^\an\subset B^\an$, by the discussion of \ref{TotallyDegenerateCase} there are only $o(m^{2r})$ pre-$m$-torsion points $y$ of $A$ over $x$ belongs to $T_0\times {x}$, such that $f(y)\ne f(\tau(\val(y)))$. So there are only $o(m^{2g})$ pre-$m$-torsion points $x\in T_0\times V_j$ of $A^\an$ such that $f(x)\ne f(\tau(\val(x)))$.  

Because functions in $L_j$ are just finite linear combinations of algebraic functions, they also satisfies the above condition. And by Stone-Weierstrass theorem \ref{Stone-Weierstrass}, $L_j$ is dense in $C(T_0\times V_j)$.

Then for any continuous function $f$ on $A^\an$, suppose $N=max_{x\in A^\an}f(x)$ and any $\epsilon>0$ we can choose $g_j\in L_j$ such that $\|\pi^*f-g_j\|<\epsilon$ (so $\|g_j\|\le N+\epsilon$), and choose $N_0\in \BN$ such that for $m>N_0$, we have $g_j(x)\ne g_j(\tau(val(x)))$ only for pre-$m$-torsion points of number less than $\epsilon m^{2g}$ and $N_1$ such that for $m>N_1$, $|\int_{A^\an} fd\mu-\frac{\Sigma_{x\in A[m]}f(\tau(\val(x)))}{m^{2g}}|<\epsilon$. Then for any $m>N_0,N_1$
\begin{equation}
\begin{split}
&\left|\frac{1}{m^{2g}}\Sigma_{x\in A[m]} f(x)-\int_{A^\an} fd\mu\right|\\
\le&\epsilon+\left|\Sigma_{x\in A[m]}\frac{1}{m^{2g}}f(x)-f(\tau (\val(x)))\right|\\
=  &\epsilon+\left|\Sigma_{x\in E[m]}\frac{1}{m^{2g}}\pi^*f(x)-\pi^*f(\tau (\val(x)))\right|\\
\le&\epsilon+\Sigma_{j=1}^n\left|\Sigma_{x\in E_j[m]}\frac{1}{m^{2g}}\pi^*f(x)-\pi^*f(\tau (\val(x)))\right|\\
=  &\epsilon+\Sigma_{j=1}^n\left|\frac{1}{m^{2g}}\Sigma (\pi^*f(x)-g_j(x)+g_j(x)-g_j(\tau(\val(x)))+g_j(\tau(\val(x)))-\pi^*f(\tau (\val(x)))\right|\\
\le&\epsilon(2n+1)+\Sigma_{j=1}^n\left|\Sigma\frac{1}{m^{2g}} (g_j(x)-g_j(\tau(\val(x))))\right|\\
\le&\epsilon(2n+1)+(N+\epsilon)n\epsilon.
\end{split}
\nonumber
\end{equation}
where $E[m]$, $E_j[m]$ are classes of representatives of pre-$m$-torsion points in $E$ and $T_0\times V_j$ respectively. Then by the generality of $\epsilon$, we know when $m\ra \infty$, this term converges to $0$, thus torsion points are equidistributed in $A^\an$.

\section{Acknowledgements}
      This paper is the author's Undergraduate Research taken in Peking University.
      
      \noindent
      The author is deeply indebted to Professor Xinyi Yuan, who introduced him to this subject and encouraged him throughout this project. He is also grateful to Professor Junyi Xie, with whom the dicussions are very useful to the author.   
    
    \clearpage
  
\end{document}